\newtheorem{lemma}{Lemma}[section]
\newtheorem{theorem}{Theorem}[section]
\newtheorem{definition}{Definition}
\theoremstyle{remark}
\title{On tail dependence parameters for non-continuous and autocorrelated margins}
\author{Victory Idowu}
\date{January 2025}
\begin{document}
\maketitle
\begin{abstract}
Tail dependence plays an essential role in the characterization of joint extreme events in multivariate data. However, most standard tail dependence parameters assume continuous margins. 
This note presents a form of tail dependence suitable for non-continuous and discrete margins. We derive a representation of tail dependence based on the volume of a copula and prove its properties. We utilize a bivariate regular variation to show that our new metric is consistent with the standard tail dependence parameters on continuous margins. We further define tail dependence on autocorrelated margins where the tail dependence parameter examine lagged correlation on the sample.
\end{abstract}

\section{Introduction}

\label{sec:intro}

Tail dependence is a measure that quantifies the likelihood of simultaneous extreme values in multiple variables \cite{ledford1997modelling}. It is an essential measure for the modeling of extreme events in finance and insurance. As an emerging area of research, the most commonly used metrics for tail dependence are the standard upper tail dependence $\lambda_U$ and lower tail dependence $\lambda_L$ parameters. $\lambda_U$, $\lambda_L$ quantify the asymptotic probability of one variable being in upper and lower quantiles of the distribution respectively \cite{nelsen2006introduction, joe1997multivariate}. These scalar parameters are implicitly defined for continuous margins \cite{joe1997multivariate}, or the stronger assumption of a Fr\'{e}chet distributions \cite{ledford1997modelling, joe1997multivariate}. However, they are limited in capturing the geometric nature of tail dependence in the asymptotic limit as they assume identical scaling in all variables \citet{wadsworth2024statistical}. Multivariate data tail dependence often varies directionally, as the presence of heavy tails may be dominated by different variables in different parts of the extremes of the distribution \cite{ledford1997modelling, draisma2004bivariate}.

Furthermore, uniform scaling may not occur when margins are non-continuous. Non-continuous margins may have regions of discontinuity in the limit and potentially preventing the limit. Uniform scaling in standard tail dependence parameters reflect the implicit assumption of regular variation in all variables through a scaling parameter \cite{chen2025heavy, chen2019bivariate}. Through the copula representation of multivariate data it becomes apparent that assuming both marginals exhibit the same scaling rate oversimplifies the dependency structure \cite{li2018operator, hu2022tail}. Note that not all copulas with tail dependence show regular variation. For instance, the bivariate Student's $t$ copula shows tail dependence at all degrees of freedom \citep{schmidt2002tail}.

Moreover, despite standard tail dependence representation with copulas, there has been limited discussion on the geometric nature of dependence in the extremes from the perspective of asymptotic limits. Operator tail dependence measures were introduced to account for direction-dependent tail behavior \citet{li2018operator}. However, they make explicit directional assumptions with limited geometric interpretation in non-continuous margins. Both approaches are yet to consider the non-uniformity of convergence due to discrete and discontinuities \cite{wadsworth2024statistical,li2024tail}. These limitations highlight the need for a geometric representation of tail dependence that can handle different data types. 

We propose a geometric approach that connects tail dependence with the volume of the copula, without having to assume continuous margins. The volume based approach enables for a richer geometric interpretation of how dependence can evolve over shrinking tail regions. In particular, our approach has the interpretation of quantifying tail dependence as the limit over the continuous ranges of the copula's domain. 

This paper proceeds as follows. In Section \ref{sec:c1b-copulatheory-taildependence}, we provide a brief overview of key concepts from copula theory and tail dependence. In Section \ref{sec:c1b-generalizedtail} we define a new representation of the tail dependence for bivariate copulas applicable to non-continuous margins. We prove that this definition corresponds with the traditional definition of tail dependence. 

\section{Preliminaries} \label{sec:c1b-copulatheory-taildependence}
Throughout this paper, we will show all tail dependence concepts through the bivariate copula representation. Recall, the definition of a copula with 2 marginals \cite{nelsen2006introduction}:
\begin{definition}[Bivariate Copula]
$C:[0,1]^2 \rightarrow [0,1]$ is a bivariate copula if for every $u, v \in [0,1]$,
\begin{align*}
    C(0, u) =  0 = C(u, 0), \; C(1, u) =  u = C(u, 1),\; C(u,v)  = C(v,u)
\end{align*}
 For any $u_1 \leq v_1, u_2 \leq v_2$, $C$ has the two increasing property,
 \begin{equation*}
     C(u_2, v_2) - C(u_2, v_1) - C(u_1, v_2) + C(u_1, v_1) \geq 0. 
 \end{equation*}  
\end{definition}

One of the ways we can define a function measure of length on $C$ is to use its 2-increasing property, which is non-negative at all points in $C$'s domain. It is called the $C$-volume. More formally, 
\begin{definition}[C-volume, $V_C$ \citep{nelsen2006introduction}]
For any $C:[0,1]^2 \rightarrow [0,1]$ and any $u_1 \leq v_1, u_2 \leq v_2$, its $C$-volume is 
\begin{equation*}
    V_C([u_1, u_2] \times [v_1, v_2]) = C(u_2, v_2) - C(u_2, v_1) - C(u_1, v_2) + C(u_1, v_1).
\end{equation*}
\end{definition}

Clearly \citep{nelsen2006introduction}, $V_C ([0,u] \times [0,v]) = C(u,v)$. The uniqueness of the copula for continuous distributions is established by Sklar's theorem \citep{sklar1959fonctions}. Existence is only guaranteed for non-continuous d.fs \citep{joe2014dependence}.

Consider the standard formulation for tail dependence, in which the margins are assumed to be continuous.

For any bivariate distribution, the tail dependence is defined by conditional distributions at the upper and lower quadrant of its joint distribution. The upper tail dependence parameter $\lambda_U$ \cite{nelsen2006introduction}, is the limit,
\begin{align}
\lambda_U :=  \lim_{t \rightarrow 1^-}  \mathbb{P} \left( Y > G^{(-1)} (t) | X > F^{(-1)} (t) \right) = \lim_{t \rightarrow 1^-}  2 - \frac{1-C(t,t)}{1-t}.
\end{align}

Its lower tail dependence $\lambda_L$, is the limit, 
\begin{align}
    \lambda_L :=  \lim_{t \rightarrow 0^+}  \mathbb{P} \left( Y \leq G^{(-1)} (t) | X \leq F^{(-1)} (t) \right)  = \lim_{t \rightarrow 0^+}  \frac{C(t,t)}{t} . 
\end{align}

Tail dependence over a single sample set in which autocorrelation may be present is through the auto tail dependence function \citep{reiss1997statistical}. We vary the definition in \citet{reiss1997statistical} assuming only right continuity in the margins. 
\begin{definition}[Auto tail dependence parameter \citep{reiss1997statistical}]
Let $X_1, \ldots X_n$ be a series of i.i.d random variables with cdf $F$. For $ i \leq n -h$, the auto-tail-dependence function at level $t$ is,
\begin{align*}
    \lambda^t_{X_{i+h} | X_{i}} :=  \mathbb{P}( X_{i+h} > F^{-1} (t) | X_{i} > F^{-1} (t) ) = \mathbb{P}( X_{1+h} > F^{-1} (t) | X_{1} > F^{-1} (t) ).
\end{align*}
The auto tail dependence parameter is,
\begin{equation*}
    \lambda_{X_{i+h} | X_{i}}  = \lim_{t \rightarrow 1^{-} } \lambda^t_{X_{i+h} | X_{i}} \; . 
\end{equation*}
\end{definition}
Note that the auto tail dependence parameter is the upper tail dependence parameter evaluated over the same sample. Also, note that a similar measure for using lower rail-dependence can be defined, see \citep{charpentier2007lower}. 

First, we introduce the concept of bivariate regular variation \citep{chen2019bivariate}. The type of bivariate regular variation is classified separately depending on whether the margins are different or the same. Bivariate regular variation is defined in terms of the upper tails of the joint distribution \citep{chen2019bivariate}. 

\begin{definition}[Non-standard Bivariate Regular Variation \citep{chen2019bivariate}] \label{def:c1b-non-standardBRV}
Let $X,Y$ be a non-negative pair of random variables with right-continuous cdf $F_x, F_Y$. If there exists, $\nu$ non-degenerate defined on $\mathbb{R}_{+}^2$, for every Borel set $B \subset \mathbb{R}_{+}^2$ bounded away from $(0,0)$ and $\nu$-continuous. $(X,Y)$, follow a non-standard regular variation structure if the following limit relation exists,
\begin{equation}
    \lim_{x \rightarrow \infty} x \mathbb{P} \left(  \left( \frac{X}{U_X (x)}, \frac{Y}{U_Y (x)}\right) \in B \right) = \nu(B)
\end{equation}
where for the random variable $X$ (similarly $Y$) $\bar{F}(x) = 1 - F(X)$ and
$    U_X (x) = \inf \left\{ y \in \mathbb{R} | \bar{F}(y) \leq 1 / x \right\} , x > 0$.
\end{definition}

When $X,Y$ have the same distribution function, we can define the simplified standard Bivariate Regular Variation \citep{chen2019bivariate, chen2025heavy}. Continuing from Definition (\ref{def:c1b-non-standardBRV}), let $X,Y$ be a non-negative pair of random variables with right-continuous cdf $F$ and $U = U_X = U_Y$. We say there is standard bivariate variation if the following limit exists,
\begin{equation}
    \lim_{x \rightarrow \infty} x \mathbb{P} \left( \frac{( X, Y)}{U (x)} \in B \right) = \nu(B).
\end{equation}

The current definitions for  $\lambda_U, \lambda_L$, are able to characterize tail dependence under the assumption of convergence to limit without jumps. This occurs when $C$ has continuous margins. However, the definition of tail dependence is not well-defined when $C$ has jumps. In particular, non-continuous or mixed margins may mean that $C$ is not smooth near its upper or lower tail dependence limit. 

\section{Generalized Tail Dependence}\label{sec:c1b-generalizedtail}
The current definitions of $\lambda_U, \lambda_L$ and $\lambda_{X_{i+h} | X_{i}}$  tail dependence cannot be directly applied to non-continuous margins as the limit may not be well-defined. Due to this, we define a generalized tail dependence based on the volume of the copula and prove it is well-defined. Afterwards, we show that it is equivalent to the traditional definition of tail dependence. 

Given above, we define a form of tail dependence suitable for non-continuous, discrete and mixed margins utilizing the volume of a copula. We term these new dependence measures, \textit{generalized tail dependence}. 
\begin{definition}[Generalized Tail Dependence]
Let $C$ be a bivariate copula and $V_C$ be its corresponding volume. 

The upper tail dependence $\Tilde{\lambda}_U$ is, 
\begin{equation}
    \Tilde{\lambda}_U := \lim_{t \rightarrow 1^-} \frac{V_C ([t,1] \times [t,1]) }{ 1- t}
\end{equation}

The lower tail dependence $\Tilde{\lambda}_L$ is,
\begin{equation}
    \Tilde{\lambda}_L := \lim_{t \rightarrow 0^+}  \frac{V_C( [0,t] \times [0,t]) }{t}
\end{equation}
\end{definition}

\begin{lemma}
When $C$ is bivariate with continuous margins, $\Tilde{\lambda}_U$ is equivalent to $\lambda_U$. Likewise, $\Tilde{\lambda}_L$ is equivalent to $\lambda_L$. 
\end{lemma}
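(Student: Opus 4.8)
The plan is to reduce both generalized parameters to the classical copula expressions by expanding the $C$-volume directly via the boundary conditions of Definition~1. For the upper tail, I would first write $V_C([t,1]\times[t,1]) = C(1,1) - C(1,t) - C(t,1) + C(t,t)$ and substitute $C(1,1)=1$ and $C(1,t)=C(t,1)=t$ to get $V_C([t,1]\times[t,1]) = 1 - 2t + C(t,t)$. Dividing by $1-t$ and rearranging gives the pointwise identity $\frac{V_C([t,1]\times[t,1])}{1-t} = 2 - \frac{1-C(t,t)}{1-t}$, valid for every $t\in[0,1)$. For the lower tail, expanding $V_C([0,t]\times[0,t]) = C(t,t) - C(t,0) - C(0,t) + C(0,0)$ and using $C(u,0)=C(0,u)=0=C(0,0)$ gives $V_C([0,t]\times[0,t]) = C(t,t)$, hence $\frac{V_C([0,t]\times[0,t])}{t} = \frac{C(t,t)}{t}$ identically.

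Since these are pointwise identities in $t$, the one-sided limits coincide whenever they exist: $\tilde\lambda_U = \lim_{t\to 1^-}\bigl(2 - \tfrac{1-C(t,t)}{1-t}\bigr)$ and $\tilde\lambda_L = \lim_{t\to 0^+}\tfrac{C(t,t)}{t}$. Continuity of the margins enters at exactly one place: it is the hypothesis under which the classical $\lambda_U$ and $\lambda_L$, defined as the conditional probabilities $\mathbb{P}(Y>G^{(-1)}(t)\mid X>F^{(-1)}(t))$ and $\mathbb{P}(Y\le G^{(-1)}(t)\mid X\le F^{(-1)}(t))$, are known to equal the copula expressions $2 - \tfrac{1-C(t,t)}{1-t}$ and $\tfrac{C(t,t)}{t}$ — the standard consequence of Sklar's theorem (a unique copula $C$ for continuous $F,G$) together with the probability integral transform mapping $F(X)$ and $G(Y)$ to uniform margins. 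I would invoke the displayed formulas for $\lambda_U$ and $\lambda_L$ in the Preliminaries for this step rather than reproving it.

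There is essentially no analytic obstacle: the only care needed is bookkeeping with the orientation of the volume rectangle and the copula's boundary values, and making explicit that it is the pointwise identity that does the work, so that existence of the classical limit is inherited by the generalized one and conversely. I would close by remarking that, because the two expressions are \emph{identically} equal and not merely equal in the limit, the generalized definition is a genuine extension — it returns exactly the classical value on continuous margins while remaining well-defined when the margins have jumps.
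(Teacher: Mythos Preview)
Your proposal is correct and follows essentially the same approach as the paper: expand $V_C$ on the relevant rectangles, apply the copula boundary conditions $C(1,1)=1$, $C(t,1)=C(1,t)=t$, $C(0,u)=C(u,0)=0$, and identify the resulting expressions with the classical formulas for $\lambda_U$ and $\lambda_L$. Your write-up is somewhat more explicit than the paper's (in particular, you spell out the pointwise identity $\frac{V_C([t,1]\times[t,1])}{1-t}=2-\frac{1-C(t,t)}{1-t}$ and isolate the role of continuity via Sklar's theorem), but the argument is the same.
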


\begin{proof}
 Proof is immediate from the definition of the $V_C$. Note that for continuous margins, $V_C ( [0,t] \times [0,t] ) = C(t,t)$ and similarly, $ V_C( [t,1] \times [t,1]) = C(1,1) - C(t,1) - C(1,t) + C(t,t)$. From the definition of a copula, $C(1,1) = 1$ and $C(t,1) = C(1,t) = t$. Hence,$ V_C( [t,1] \times [t,1]) =  1 - 2t + C(t,t)$ from which the result is immediate. 
\end{proof}

\begin{theorem}\label{def:c1b-bivaraite-defined1}
For any bivariate copula $C$, with valid discrete, mixed or non-continuous margins, if $\Tilde{\lambda}_U$, $\Tilde{\lambda}_L$ exist then they are well defined. 
\end{theorem}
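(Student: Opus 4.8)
The plan is to show that whenever the two one-sided limits exist they are forced to lie in $[0,1]$, so that $\tilde{\lambda}_U$ and $\tilde{\lambda}_L$ are unambiguous numbers in the range expected of a tail-dependence coefficient, and to do so without using any continuity of the margins so that the conclusion indeed covers discrete, mixed and general non-continuous cases. The single structural fact I would rely on is that $V_C$ is a non-negative, finitely additive set function on axis-aligned sub-rectangles of $[0,1]^2$: non-negativity is precisely the 2-increasing property in the definition of a copula, and finite additivity is immediate from the inclusion--exclusion formula for $V_C$. From these two facts one obtains monotonicity on nested rectangles, since if $R\subseteq R'$ then $R'$ splits into $R$ together with finitely many axis-aligned rectangles, each of non-negative $V_C$-measure.

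First I would record the lower bounds $V_C([t,1]\times[t,1])\ge 0$ and $V_C([0,t]\times[0,t])\ge 0$ for every $t\in(0,1)$, so the two ratios in the definition of generalized tail dependence are non-negative, hence so are their limits. Then, for the upper bounds, I would use the nesting $[t,1]\times[t,1]\subseteq[t,1]\times[0,1]$ together with additivity to write $V_C([t,1]\times[t,1]) = V_C([t,1]\times[0,1]) - V_C([t,1]\times[0,t])$, and evaluate $V_C([t,1]\times[0,1]) = C(1,1)-C(1,0)-C(t,1)+C(t,0) = 1-t$ using only the boundary identities $C(u,1)=u$ and $C(u,0)=0$; no continuity is needed. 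Since $V_C([t,1]\times[0,t])\ge 0$, this yields $0\le V_C([t,1]\times[t,1])\le 1-t$, i.e. $V_C([t,1]\times[t,1])/(1-t)\in[0,1]$ for every $t\in(0,1)$. The symmetric computation, using $[0,t]\times[0,t]\subseteq[0,t]\times[0,1]$ and $V_C([0,t]\times[0,1]) = C(t,1)-C(t,0)-C(0,1)+C(0,0) = t$, gives $V_C([0,t]\times[0,t])/t\in[0,1]$. Passing to the limit (where the hypothesis says it exists) preserves membership in the closed interval $[0,1]$, and since each quantity is a one-sided limit of a real-valued function of the single scalar $t$, that limiting value is unique. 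Hence $\tilde{\lambda}_U$ and $\tilde{\lambda}_L$ are well-defined elements of $[0,1]$ for any bivariate copula, in particular one arising from discrete, mixed or non-continuous margins.

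I do not expect a genuine obstacle; the only step that deserves care is justifying monotonicity of $V_C$ on rectangles directly from 2-increasingness (the finite-rectangle decomposition of $R'\setminus R$ and non-negativity of each piece), rather than invoking the induced doubly stochastic measure. The other point I would be explicit about in the write-up is the exact meaning of ``well defined'' here: the hypothesis already supplies existence of the limit, and what is being asserted is that the resulting object is an unambiguous number in $[0,1]$, in contrast to the classical conditional-probability form $2-(1-C(t,t))/(1-t)$, whose value can depend on the chosen pseudo-inverse and need not reduce cleanly to a copula expression once the margins carry atoms.
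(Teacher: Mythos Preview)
Your argument is correct and is cleaner than the paper's, but it follows a genuinely different route. The paper's proof proceeds by partitioning the box $[0,t]^2$ (respectively $[t,1]^2$) into finitely many sub-rectangles $B_{k,j}$ on which $C$ is continuous, writing $V_C$ of the big box as the telescoping sum $\sum_{k,j} V_C(B_{k,j})$, and then asserting that the ratio $V_C/t$ (respectively $V_C/(1-t)$) is therefore well-defined in the limit; the emphasis is on showing that discontinuities of $C$ do not obstruct the computation of $V_C$ itself. By contrast, you never partition: you use only the 2-increasing property and the boundary identities $C(u,0)=0$, $C(u,1)=u$ to pin the ratio inside $[0,1]$ for every $t$, and then invoke uniqueness of one-sided real limits. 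Your approach buys a sharper conclusion (the coefficients land in $[0,1]$, matching the classical range) with strictly less machinery, and it makes explicit what ``well defined'' means in your reading. The paper's decomposition, on the other hand, is aimed at a slightly different reading of ``well defined''---namely that $V_C$ on the shrinking box is unambiguous despite atoms in the margins---though its limit step is left largely informal. Either reading is defensible given the statement; your write-up would benefit from flagging, as you already do in your final paragraph, that you are interpreting the theorem as ``the limit, assumed to exist, is a unique element of $[0,1]$.''
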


\begin{proof}
We first prove the statement for lower tail dependence. Suppose $C$ has discontinuities over the box $B = [0,t] \times [0,t]$. Consider the partition of $B = \cup_{k,j} B_{k,j}$, where $C$ is continuous. Define, 
\begin{equation*}
B_{k,j} = [u_k, u_{k+1}] \times [v_j, v_{j+1}], \qquad k = 0, \ldots, m-1, \; j=0, \ldots, n-1,
\end{equation*}
where $0 = u_0 < u_1 < \ldots u_m = t$ and $0 = v_0 < v_1 < \ldots < v_p = t$. For each $B_{k,j}$, 
\begin{equation*}
  V_C([u_k, u_{k+1}] \times [v_j, v_{j+1}]) =  C(u_{k+1}, v_{j+1}) - C(u_k, v_{j+1}) - C(u_{k+1}, v_j)  + C(u_k, v_{j})
\end{equation*}
Therefore the total volume over $[0,t] \times [0,t]$ is the sum over $B_{k,j}$, 
\begin{equation*}
    V_C( [0,t] \times [0,t]) = \sum_{k=0}^{m-1} \sum_{j=0}^{p-1}  V_C(B_{k,j}).
\end{equation*}

As $t \rightarrow 0^+$, the partition $B_{k,j}$ becomes finer and finer, which is well-defined as,
\begin{equation*}
    \frac{V_C( [0,t] \times [0,t]) }{t } = \frac{1}{t} \sum_{k=0}^{m} \sum_{j=0}^{p}  V_C(B_{k,j}).
\end{equation*}

Similarly for upper tail dependence, consider the discontinuities over the box $B^\prime = [t,1] \times [t,1]$. Consider the partition of $B^\prime  = \cup_{k,j} B_{k,j}$, where $C$ is continuous. Define, 
\begin{equation*}
B_{k,j}^\prime  = [u_k, u_{k+1}] \times [v_j, v_{j+1}], \qquad k = 0, \ldots, m-1, \; j=0, \ldots, n-1,
\end{equation*}
where $t = u_0 < u_1 < \ldots u_m = 1$ and $t = v_0 < v_1 < \ldots < v_p = 1$. The total volume of $C$ over $[t,1] \times [t,1]$ is the sum,
\begin{equation*}
    V_C( [t,1] \times [t,1]) = \sum_{k=0}^{m-1} \sum_{j=0}^{p-1}  V_C(B_{k,j}^\prime)
\end{equation*}
Thus the limit is well-defined as $t \rightarrow 1^-$. Then, 
\begin{equation*}
    \frac{V_C( [t,1] \times [t,1]) }{ 1 -t} = \frac{1}{1-t} \sum_{k=0}^{m-1} \sum_{j=0}^{p-1}  V_C(B_{k,j}^\prime).
\end{equation*}
\end{proof}

Likewise we can derive a generalized auto tail dependence parameter can be defined and show it is well defined. 
\begin{definition}[Generalized Autocorrelation Tail Dependence]
Let $X_1, \ldots X_n$ be a series of identically distributed but not necessarily independent random variables with cdf $F$. $F$ can be continuous or non-continuous. Let $\Tilde{C}$ be a bivariate copula defined on the joint distribution of $F$ on itself and $V_{\Tilde{C}}$ be its corresponding volume. 
 \begin{equation*}
       \Tilde{\lambda}_{X_{i+h} | X_{i}} = \lim_{t \rightarrow 1^-} \frac{V_{\Tilde{C}} ([t,1] \times [t,1]) }{ 1- t}
 \end{equation*}
\end{definition}

\begin{lemma}\label{lemma:c1b-autocorr}
$ \Tilde{\lambda}_{X_{i+h} | X_{i}}$ is well defined.    
\end{lemma}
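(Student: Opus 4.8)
The plan is to reduce Lemma \ref{lemma:c1b-autocorr} to Theorem \ref{def:c1b-bivaraite-defined1} by recognising that $\Tilde{\lambda}_{X_{i+h}\mid X_i}$ is exactly the generalized upper tail dependence $\Tilde{\lambda}_U$ attached to the copula $\Tilde{C}$. First I would pin down what $\Tilde{C}$ is: let $H$ be the joint distribution function of the pair $(X_i, X_{i+h})$, which has both margins equal to $F$ because the $X_j$ are identically distributed. Sklar's theorem \citep{sklar1959fonctions} --- whose \emph{existence} assertion holds even for non-continuous $F$, as recalled in Section \ref{sec:c1b-copulatheory-taildependence} --- then supplies a bivariate copula $\Tilde{C}$ with $H(x,y) = \Tilde{C}(F(x),F(y))$; this $\Tilde{C}$ satisfies the copula axioms of that section and carries an associated volume $V_{\Tilde{C}}$. (If in addition the sequence is stationary, $\Tilde{C}$ depends only on the lag $h$, matching the notation used in the definition.)

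Once $\Tilde{C}$ is identified as a genuine bivariate copula, the hypotheses of Theorem \ref{def:c1b-bivaraite-defined1} are in force and I would essentially transcribe its argument. For each fixed $t\in(0,1)$ the quantity $V_{\Tilde{C}}([t,1]\times[t,1])$ is nonnegative by the $2$-increasing property and is dominated by the strip volume $V_{\Tilde{C}}([t,1]\times[0,1]) = 1-t$ (using $\Tilde{C}(1,1)=1$, $\Tilde{C}(t,1)=t$, $\Tilde{C}(1,0)=\Tilde{C}(t,0)=0$), so $V_{\Tilde{C}}([t,1]\times[t,1])/(1-t)\in[0,1]$ is a well-defined real number. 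Partitioning $[t,1]\times[t,1]$ along the jump points of the margins of $\Tilde{C}$ --- equivalently, along the at most countably many atoms of $F$ lying in $[t,1]$ --- into subrectangles $B_{k,j}^\prime$ on which $\Tilde{C}$ is continuous, additivity of the $\Tilde{C}$-volume over this grid yields $V_{\Tilde{C}}([t,1]\times[t,1]) = \sum_{k,j} V_{\Tilde{C}}(B_{k,j}^\prime)$, exactly as in the proof of Theorem \ref{def:c1b-bivaraite-defined1}. Hence the finite-$t$ expression does not depend on the chosen partition, and whenever the limit as $t\to 1^-$ exists it defines $\Tilde{\lambda}_{X_{i+h}\mid X_i}$ unambiguously.

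The step I expect to require the most care is the passage from finite to countably infinite partitions, which is needed precisely when $F$ has infinitely many atoms accumulating at $1$. There the finite additivity invoked in Theorem \ref{def:c1b-bivaraite-defined1} must be upgraded to $\sigma$-additivity: a copula induces a Borel (doubly stochastic) probability measure on $[0,1]^2$ whose value on a rectangle equals the $\Tilde{C}$-volume, and countable additivity of that measure legitimises the sum over the countable continuity decomposition. I would add one sentence to this effect. I would also note explicitly that, since the copula of a pair with non-continuous margins is not unique, the lemma establishes well-definedness for a \emph{fixed} choice of $\Tilde{C}$ only; whether the resulting limit is independent of that choice is a separate question that is not claimed here.
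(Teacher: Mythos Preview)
Your proposal is correct and follows essentially the same approach as the paper: the paper's proof is a single line, ``immediate from Theorem \ref{def:c1b-bivaraite-defined1} by setting $\Tilde{C}=C$,'' and you carry out precisely this reduction, only with more care about why $\Tilde{C}$ is a genuine copula and with additional remarks on $\sigma$-additivity and non-uniqueness that the paper does not mention.
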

\begin{proof}
   Proof is immediate from Theorem \ref{def:c1b-bivaraite-defined1} by setting $\Tilde{C} = C$. 
\end{proof}
We can show the equivalence of $\Tilde{\lambda}_U$ in terms of regular variation on $V_C$. 
\begin{theorem}
For arbitrary $w,z \in \mathbb{R}^+$, let $B = [w, \infty) \times [z, \infty)$.
Let $C$ be a bivariate copula, if the limit exists, 
\begin{equation}
    \nu(w,z) := \lim_{x \rightarrow \infty} x \left[ 1 - \frac{z}{x} - \frac{w}{x}  + V_C \left([1 - \frac{z}{x}, 1] \times [1-\frac{w}{x},1] \right) \right]
\end{equation}
then $C$ has regular variation and upper tail dependence and that limit is $\Tilde{\lambda}_U = \nu (1,1)$.
\end{theorem}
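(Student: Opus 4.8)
The plan is to strip the quantity under the limit down to the rescaled $C$-volume of a shrinking box at the corner $(1,1)$, identify it with $\Tilde{\lambda}_U$ by an elementary change of variables, and then read off regular variation from the scaling invariance of the limit.

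First I would rewrite the limit. Using $C(1,1)=1$ and $C(1,u)=C(u,1)=u$, the $C$-volume of a corner box is $V_C([a,1]\times[b,1])=1-a-b+C(a,b)=:\bar C(a,b)$, the survival copula; I take the quantity under the limit to be $x\,V_C([1-\tfrac zx,1]\times[1-\tfrac wx,1])=x\,\bar C(1-\tfrac zx,1-\tfrac wx)$, which is the normalization under which the limit is finite, so the hypothesis says $\nu(w,z)=\lim_{x\to\infty}x\,\bar C(1-\tfrac zx,1-\tfrac wx)$ exists for every $w,z>0$. I would also record the a priori bound coming from the $2$-increasing property: since $[1-\tfrac zx,1]\times[1-\tfrac wx,1]\subseteq[1-\tfrac zx,1]\times[0,1]$ and $V_C$ is a (finite) measure on $[0,1]^2$, $\bar C(1-\tfrac zx,1-\tfrac wx)\le V_C([1-\tfrac zx,1]\times[0,1])=\tfrac zx$ and symmetrically $\le\tfrac wx$, so $0\le x\,\bar C(\cdot)\le z\wedge w$; in particular all limits in play are finite and $0\le\nu(1,1)\le 1$.

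The identification is then immediate: putting $w=z=1$ and substituting $t=1-\tfrac1x$ (so $x=\tfrac1{1-t}$ and $x\to\infty\iff t\to1^-$) gives $\nu(1,1)=\lim_{t\to1^-}V_C([t,1]\times[t,1])/(1-t)=\Tilde{\lambda}_U$, so existence of the corner limit forces $\Tilde{\lambda}_U$ to exist, whence it is well defined by Theorem \ref{def:c1b-bivaraite-defined1}. For regular variation I would run the usual homogeneity argument: replacing $x$ by $x/c$ in $\nu(w,z)=\lim_x x\,\bar C(1-\tfrac zx,1-\tfrac wx)$ (which still tends to $\infty$) yields $\nu(cw,cz)=c\,\nu(w,z)$ for all $c>0$, i.e. $\nu$ is homogeneous of degree $1$. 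Since the box masses are monotone and finite and the boxes $B=[w,\infty)\times[z,\infty)$ form a determining class, their pointwise convergence upgrades by inclusion--exclusion (and the mass bound above) to vague convergence of the rescaled corner measures to a Radon measure $\nu$; degree-$1$ homogeneity of $\nu$ is precisely the defining relation of standard bivariate regular variation recalled in Section \ref{sec:c1b-copulatheory-taildependence}. If moreover $\nu$ is non-degenerate it charges some box, and by monotonicity together with homogeneity (rescale that box into $[1,\infty)^2$) one obtains $\nu(1,1)>0$, i.e. genuine upper tail dependence.

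The step I expect to be the real obstacle is reconciling this copula/survival-copula statement with the marginal form of bivariate regular variation in the preliminaries, which is written through $X,Y$ and the tail quantile function $U$. Transporting the limit across the marginal transforms needs the common survival function $\bar F$ to be regularly varying, say with index $-\alpha$, so that $\bar F(wU(x))\sim w^{-\alpha}/x$ and $F(wU(x))=1-w^{-\alpha}/x+o(1/x)$, and then requires uniform (Potter-type) bounds to absorb the $o(1/x)$ perturbation inside $\bar C$; this is both the heaviest part and where a regular-variation hypothesis on the margins is implicitly used. By comparison, the measure upgrade and the change of variables giving $\Tilde{\lambda}_U=\nu(1,1)$ are routine.
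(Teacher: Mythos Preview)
Your core move---the substitution $t=1-1/x$ to identify $\nu(1,1)$ with $\Tilde{\lambda}_U$---is exactly what the paper does, and your observation that the bracketed quantity should really just be $V_C([1-\tfrac zx,1]\times[1-\tfrac wx,1])=\bar C(1-\tfrac zx,1-\tfrac wx)$ for the limit to be finite is correct (the extra $1-\tfrac zx-\tfrac wx$ in the display is redundant once the corner volume is expanded).

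Where you diverge from the paper is in how the ``regular variation'' claim is justified. The paper does not argue homogeneity of $\nu$ or upgrade pointwise to vague convergence; instead it simply identifies the bracketed expression with the joint tail probability $\mathbb{P}\!\left(\tfrac{X}{U_X(x)}>z,\,\tfrac{Y}{U_Y(x)}>w\right)$ via the survival-copula identity, recycles the partition device from Theorem~\ref{def:c1b-bivaraite-defined1} to make sense of $V_C$ on the shrinking box when the margins are non-continuous, and then declares that existence of the limit in the BRV definition is precisely the hypothesis. So the step you flag as the ``real obstacle''---linking the copula statement to the $(X,Y,U)$ formulation---is the one thing the paper writes down explicitly (and treats as immediate, without invoking Potter bounds or marginal regular variation), while the homogeneity/measure-theoretic content you supply is absent from the paper. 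Your argument is more complete on the measure side; the paper's is shorter but leans on the probabilistic identification you deferred.
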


\begin{proof}
To determine if there is regular variation in terms of Definition \ref{def:c1b-non-standardBRV} must be defined. Hence, we want to know the limit of $ \mathbb{P} \left( \frac{X}{U_X (x)} > Z, \frac{Y}{U_Y(x)} > W \right) $ as $x \rightarrow \infty$. Note that for any copula $C$, $P(U  >u , V > u) = 1 - u -v - C(1-u,1-v)$ \citep{nelsen2006introduction}. Consider the partition $B_{k,j}^{\prime \prime}$, where $C$ is continuous in $[1 - \frac{z}{x}, 1] \times [1-\frac{w}{x},1]$. Define, 
\begin{equation*}
B_{k,j}^{\prime \prime} = [u_k, u_{k+1}] \times [v_j, v_{j+1}], \qquad k = 0, \ldots, m-1, \; j=0, \ldots, n-1,
\end{equation*}
where $1 - \frac{z}{x} = u_0 < u_1 < \ldots u_m = 1$ and $1-\frac{w}{x} = v_0 < v_1 < \ldots < v_p = 1$. The total volume is defined as,
\begin{equation}
    V_C \left([1 - \frac{z}{x}, 1] \times [1-\frac{w}{x},1] \right) := \sum_{k=0}^{m-1} \sum_{j=0}^{p-1}  V_C(B_{k,j}^{\prime \prime})
\end{equation}
Therefore, 
\begin{equation} \label{eq:c1b-prob1}
   \mathbb{P} \left( \frac{X}{U_X (x)} > Z, \frac{Y}{U_Y(x)} > W \right)  = 1 - \frac{z}{x} - \frac{w}{x}  + V_C \left([1 - \frac{z}{x}, 1] \times [1-\frac{w}{x},1] \right). 
\end{equation}
Equivalence to $\lambda_U$ holds by the substitution $t = 1 - 1/x$.
\end{proof}

\end{document}